\numberwithin{equation}{section}
\newtheorem{thm}{Theorem}[section]
\newtheorem{defn}{Definition}[section]
\newtheorem{coro}[thm]{Corollary}
\def\proclaim#1{\par \bigskip\noindent {\bf #1}\bgroup\it\ }
\def\endproclaim{\egroup\par\bigskip}
\def\pr{\mathbb{P}} 
\def\ep{\mathbb{E}}
\def\text#1{\mbox{\rm #1}}
\begin{document}
\title{\bf Ladder Chains: A Variation of Random Walks}
\date{}
\maketitle
\begin{center}
\vskip -1.5cm {{\sc {\large Chenhe Zhang}\footnote[1]{Department of Mathematics, Zhejiang University, Hangzhou 310027, P.R.China; 3150104161@zju.edu.cn}  and  {\large Xiang Fang}\footnote[2]{Department of Mathematics, Zhejiang University, Hangzhou 310027, P.R.China; 3150103685@zju.edu.cn}}}
\end{center}

\bigskip
\noindent{\bf Abstract.} 
The authors propose a new variation of random walks called ladder chains $L(r,s,p)$. We extend concepts such as ruin probability, hitting time, transience and recurrence of random walks to ladder chain. Take $L(2,2,p)$ for instance, we find the linear difference equations that the ruin probability and the hitting time satisfy. We also prove the recurrence of a critical case $(p=\sqrt{2}-1)$. All approaches of these results can be generalized to solve similar problems for other ladder chains.

\noindent{\bf AMS 2010 subject classification:} 60G40, 60J75.
\\
\noindent{\bf Keywords:} random walks, probabilities of ruin, mean duration, ladder chains, transience and recurrence.

\section{Introduction}

Research on random walks has a long history. The random walk problem was first formally proposed by Pearson in 1905 \cite{PRWP}. At the same year, Rayleigh solved it and extended the problem to 2-dimensions \cite{PRWR}. Later on, P\'{o}lya discussed the recurrence of random walks of several dimensions in his paper \cite{Polya}. After that, Erd\H{o}s and R\'{e}nyi initiated the study of random graphs in \cite{RGE,RGG} and greatly advanced the research on graph theory. There is also much research on discrete variations of random walks such as L\'{e}vy flight, random walks on Riemannian manifolds, and random walks on finite groups (see \cite{RWS,RWF}). In the past several decades, the related theoretical work has made a tremendous success in various fields, including but not limited to, physics, psychology, computer science, and solving Laplace's equation (see \cite{DLG,EBRW,AAQ,RWHE}).

However, to the best knowledge of the authors, there is little mature research on the variation of random walks as follows. Suppose $\{\xi_n\}$ is a sequence of independent and identically distributed random variables and $\mathbf{X}=(X_n,n\geqslant 1)$ is a stochastic process, where $X_n$ is a simple function of $(\xi_n,\cdots,\xi_{n-r+1})$ unrelated to $n$. Define $S_n=X_1+\cdots+X_n$. Then it is natural to ask the three questions as below.
\begin{enumerate}[(Q-1)]
\item What is the ruin probability for $\{S_n\}$?
\item How to calculate the mean duration of this case?
\item Can we define the transience and recurrence about $\{S_n\}$ similarly to Markov chains, along with an easy criterion?
\end{enumerate}
Although it is easy to see that $(S_n,n\geqslant 1)$ is a second order Markov chain, we cannot directly apply the properties of higher order Markov chain to answer the above questions. Therefore, we start from a new perspective and solve these kinds of problems using the methods for random walks in \cite{Prob}.

The rest of the paper is organized as follows. In Section 2, we give and prove the linear differential equation that the ruin probability satisfies, using reflection principle. This is followed in Section 3 by proving the existence of the mean duration and the linear difference equation it satisfies. Finally, we compute the ``absorption probability'' for a critical case and derive the recurrence of this case in Section 4.

\section{How likely is the gambler ruin?}

Think about such a question. Suppose two gambler A and B are gambling under the rule as follows. If A wins B,  B gives A a coin; otherwise, A gives B a coin. Specially, if A wins B both this round and previous round, B has to give A an extra coin. We already know that the probability that A wins B is $p>0$ and they have $a$ and $b$ coins respectively. How likely is A before B to ruin within finite rounds? It is easy to find that this problem is a variation of the gambler's ruin problem. To solve it, we first introduce some notations.

Consider the Bernoulli scheme $(\Omega,\mathcal{A},\pr)$, where 
$$\Omega=\{\omega:\omega=(x_1,x_2,\cdots),x_i=\pm1\},\ \mathcal{A}=\{A:A\subseteq\Omega\}.$$ 
Let $\xi_1,\xi_2,\cdots,\xi_n,\cdots\ i.i.d.\sim Bernoulli(p)$ and $q=1-p$. Define
\begin{equation}
X_k=\begin{cases}
-1,&\mathrm{if}\ \xi_k=-1,\\
2,&\mathrm{if}\ \xi_k=\xi_{k-1}=1,\\
1,&\mathrm{otherwise},
\end{cases}
\end{equation}
and $S_k=X_1+\cdots+X_k$. For any $n\in\mathbb{N}$ and starting point $x\in\mathbb{R}$, define $S_n^x=x+S_n$ and stopping time
\begin{equation}
\tau_k^x=\begin{cases}
min\{l:S_l^x\leqslant L\ or\ S_l^x\geqslant U,\ 0\leqslant l\leqslant k \},&\exists l=0,\cdots,k,\ s.t.\ S_l^x \leqslant L\ or\ S_l^x\geqslant U,\\
k,&\mathrm{otherwise},
\end{cases}
\label{e taukx}
\end{equation}
where $L<U$ are two integers. Let 
\begin{equation}
\mathcal{A}_k^x=\bigcup_{l=0}^k\{w:\tau_k^x=l\ \mathrm{and}\ S_l^x \leqslant L\},\quad \alpha_k(x)=\mathbb{P}(\mathcal{A}_k^x),
\end{equation}
and
\begin{equation}
\mathcal{B}_k^x=\bigcup_{l=0}^k\{w:\tau_k^x=l\ \mathrm{and}\ S_l^x \geqslant U\},\quad \beta_k(x)=\mathbb{P}(\mathcal{B}_k^x).
\end{equation}
Obviously we have for any positive integer $k$,
\begin{equation}
\begin{cases}
\alpha_k(L)=\beta_k(U)=1,\\ 
\beta_k(L)=\alpha_k(U)=0
\end{cases}
\label{e boundary conditions}
\end{equation}
as the boundary conditions.
\begin{thm}
For any integer $x\in[L,U]$, there exists $0\leqslant\alpha(x)\leqslant 1$ and $0\leqslant\beta(x)\leqslant 1$ such that
$$\alpha(x)=\lim\limits_{k\rightarrow\infty}\alpha_k(x),\quad \beta(x)=\lim\limits_{k\rightarrow\infty}\beta_k(x).$$ 
\label{t exist}
\end{thm}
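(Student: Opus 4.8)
The plan is to deduce convergence from monotonicity together with the boundedness $0\leqslant\alpha_k(x),\beta_k(x)\leqslant 1$. Concretely, I would first establish the set inclusions $\mathcal{A}_k^x\subseteq\mathcal{A}_{k+1}^x$ and $\mathcal{B}_k^x\subseteq\mathcal{B}_{k+1}^x$ for every $k$. Granting these, $\alpha_k(x)=\mathbb{P}(\mathcal{A}_k^x)$ is a non-decreasing sequence taking values in $[0,1]$, hence converges to some $\alpha(x)\in[0,1]$; in fact continuity of $\mathbb{P}$ from below even identifies the limit as $\alpha(x)=\mathbb{P}\big(\bigcup_{k\geqslant 1}\mathcal{A}_k^x\big)$, and symmetrically $\beta(x)=\mathbb{P}\big(\bigcup_{k\geqslant 1}\mathcal{B}_k^x\big)$.

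The real content is the verification of these inclusions, where the only delicacy is that $\tau_k^x$ caps the first boundary-crossing time at $k$ and is therefore genuinely $k$-dependent. To handle this I would introduce the uncapped first exit time $\sigma^x=\min\{l\geqslant 0:S_l^x\leqslant L\ \text{or}\ S_l^x\geqslant U\}$ (possibly $+\infty$), which does not depend on $k$, and note that $\tau_k^x=\sigma^x$ on the event $\{\sigma^x\leqslant k\}$. Unwinding the definition of $\mathcal{A}_k^x$ then shows that $\omega\in\mathcal{A}_k^x$ if and only if $\sigma^x\leqslant k$ and $S_{\sigma^x}^x\leqslant L$, i.e. the walk first leaves $(L,U)$ through the lower boundary at a time no later than $k$. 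The subtle point is the boundary value $\tau_k^x=k$, which occurs both when a genuine exit happens exactly at time $k$ and when no exit has happened by time $k$; only the former belongs to $\mathcal{A}_k^x$, since membership additionally forces $S_k^x\leqslant L$, whereas on the no-exit event $L<S_k^x<U$.

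With this characterization the inclusions are immediate: if $\omega\in\mathcal{A}_k^x$ then $\sigma^x\leqslant k\leqslant k+1$ and $S_{\sigma^x}^x\leqslant L$, so $\omega\in\mathcal{A}_{k+1}^x$, and the identical argument handles $\mathcal{B}_k^x\subseteq\mathcal{B}_{k+1}^x$. Since each event is measurable (being determined by finitely many of the $\xi_i$), the monotone sequences of probabilities converge and their limits lie in $[0,1]$, which is the assertion. I expect the only obstacle to be the bookkeeping around the capped value $\tau_k^x=k$ discussed above; once the events are read correctly as ``absorption through the lower (resp. upper) boundary by time $k$'', monotonicity, and hence existence of the limits, is forced.
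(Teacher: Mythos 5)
Your proof is correct and follows essentially the same route as the paper: the events $\mathcal{A}_k^x$ (resp.\ $\mathcal{B}_k^x$) are increasing in $k$, so $\alpha_k(x)$ and $\beta_k(x)$ are non-decreasing sequences in $[0,1]$ and converge by monotone convergence. The only difference is that you carefully verify the inclusion $\mathcal{A}_k^x\subseteq\mathcal{A}_{k+1}^x$ via the uncapped exit time $\sigma^x$ --- a detail the paper dismisses as ``easy to see'' --- which is a worthwhile elaboration but not a different argument.
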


\begin{proof}
It is easy to see that $\{\mathcal{A}_k^x\}_{k=1}^{\infty}$ is a sequence of increasing events. Therefore,
$$0\leqslant \alpha_1(x)\leqslant\alpha_2(x)\leqslant\cdots\leqslant\alpha_k(x)\leqslant\cdots\leqslant 1.$$ The monotone convergence theorem promises the convergence of $\{\alpha_k(x)\}$, that is $\lim\limits_{k\rightarrow\infty}\alpha_k(x)$ exists and is in $[0,1]$. Similarly, we know $\lim\limits_{k\rightarrow\infty}\beta_k(x)$ exists and is in $[0,1]$. 
\end{proof}

The following theorem gives the values of $\alpha(x)$ and $\beta(x)$.
\begin{thm}
Suppose $U-L\geqslant 4$, then the $\alpha(\cdot)$ defined above are the unique solution of the linear difference equation
\begin{equation}
\alpha(x)=\begin{cases}0,&x=U,\\
q\alpha(x-1),&x=U-1,\\
pq\alpha(x)+q\alpha(x-1),&x=U-2,\\
pq\alpha(x)+p\alpha(x+2)-pq\alpha(x+1)+q\alpha(x-1),&L+1\leqslant x\leqslant U-3,\\
1,&x=L.
\end{cases}
\label{e ax}
\end{equation}
And the $\beta(\cdot)$ defined above are the unique solution of the linear difference equation
\begin{equation}
\beta(x)=\begin{cases}1,&x=U,\\
p+q\beta(x-1),&x=U-1,\\
p^2+pq\beta(x)+q\beta(x-1),&x=U-2,\\
pq\beta(x)+p\beta(x+2)-pq\beta(x+1)+q\beta(x-1),&L+1\leqslant x\leqslant U-3,\\
0,&x=L.
\end{cases}
\label{e bx}
\end{equation}
\label{t axbx}
\end{thm}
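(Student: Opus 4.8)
The plan is to enlarge the state so that the walk becomes Markovian and then run a one-step analysis. Although $\{S_n\}$ alone is not Markov, the pair $(S_n,\xi_n)$ is, since each increment $X_{n+1}$ is a function of $\xi_{n+1}$ and the single ``ladder state'' $\xi_n$. Accordingly I would introduce a companion family: for integer $x\in[L,U]$ let $\alpha^{+}_k(x)$ be the analogue of $\alpha_k(x)$ computed as if a \emph{virtual} previous toss $\xi_0=+1$ had occurred, so that a first toss $\xi_1=+1$ produces a jump of $+2$ rather than $+1$. Exactly as in Theorem~\ref{t exist}, the events defining $\alpha^{+}_k(x)$ increase in $k$, so the limit $\alpha^{+}(x)=\lim_k\alpha^{+}_k(x)$ exists; the quantity $\alpha(x)$ of the theorem is the ``mode $-$'' object (a fresh start with no prior $+1$).

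First I would condition on $\xi_1$. For $L<x<U$, with probability $q$ the walk steps to $x-1$ and resets the ladder state to $-$, while with probability $p$ it steps to $x+1$ and sets the ladder state to $+$; by the independence of the $\xi_i$ (equivalently, the Markov property of the enlarged chain) one obtains, after letting $k\to\infty$ (legitimate by Theorem~\ref{t exist}),
\begin{equation}
\alpha(x)=q\,\alpha(x-1)+p\,\alpha^{+}(x+1),\qquad \alpha^{+}(x)=q\,\alpha(x-1)+p\,\alpha^{+}(x+2),
\label{e plan-coupled}
\end{equation}
with the convention that $\alpha^{+}(y)=0$ for all $y\geqslant U$ (a jump reaching or passing the top stops there without ruin). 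Solving the first relation of \eqref{e plan-coupled} for the companion term, $\alpha^{+}(x+1)=\tfrac1p\big(\alpha(x)-q\,\alpha(x-1)\big)$, and substituting the resulting expressions for $\alpha^{+}(x)$ and $\alpha^{+}(x+2)$ into the second relation eliminates $\alpha^{+}$; after multiplying through by $p$ and shifting the index this yields precisely the interior equation $\alpha(x)=pq\,\alpha(x)+p\,\alpha(x+2)-pq\,\alpha(x+1)+q\,\alpha(x-1)$ for $L+1\leqslant x\leqslant U-3$.

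The exceptional rows near $U$ come from the overshoot bookkeeping that the interior elimination does not see. For $x=U-1$ a toss $\xi_1=+1$ lands at $U$ and stops with no ruin, leaving only $\alpha(U-1)=q\,\alpha(U-2)$; for $x=U-2$ the ladder state $+$ at $U-1$ can jump to $U+1$ (again stopping), so \eqref{e plan-coupled} collapses to $\alpha(U-2)=pq\,\alpha(U-2)+q\,\alpha(U-3)$; and $\alpha(U)=0$, $\alpha(L)=1$ are immediate from the definition of $\mathcal{A}_k^x$ together with \eqref{e boundary conditions}. The equation \eqref{e bx} for $\beta$ follows by the identical argument with the roles of the two barriers exchanged; the only difference is that an overshoot of $U$ now counts as a \emph{success}, which is exactly what produces the inhomogeneous terms $p$ (at $x=U-1$) and $p^{2}$ (at $x=U-2$).

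It remains to prove uniqueness. Writing the five cases of \eqref{e ax} out gives a square linear system in the $U-L+1$ unknowns $\alpha(L),\dots,\alpha(U)$, and since the ruin probability manifestly solves it, uniqueness is equivalent to the associated homogeneous system ($\alpha(L)=\alpha(U)=0$) having only the trivial solution. I would establish this by solving the interior recursion for the top value, $\alpha(x+2)=\tfrac1p\big((1-pq)\alpha(x)+pq\,\alpha(x+1)-q\,\alpha(x-1)\big)$, which propagates the two free parameters $\alpha(L+1),\alpha(L+2)$ upward through the whole interval; the homogeneous rows at $x=U-1$ and $x=U-2$ then impose two linear constraints on these parameters, and one checks that the corresponding $2\times2$ determinant is nonzero for every $p\in(0,1)$ with $U-L\geqslant4$. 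I expect the main obstacle to be exactly this bookkeeping: carrying the ladder state and the overshoot at $U$ correctly through the elimination so that the interior equation and the three exceptional top rows emerge with the right coefficients, and then verifying the non-degeneracy that upgrades ``solves'' to ``unique solution''. A maximum-principle argument applied to the homogeneous recursion would be an alternative route to the same uniqueness conclusion.
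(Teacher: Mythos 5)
Your derivation of the equations is correct, and it follows a genuinely different route from the paper's. The paper stays with the finite-horizon quantities $\beta_k(x)$: it conditions on $\xi_1$ and on the first index $L_k$ at which a $-1$ appears, and its key step is a path bijection (the ``reflection principle'') giving $\mathbb{P}(\mathcal{B}_k^x\mid L_k\geqslant 3)=\mathbb{P}(\mathcal{B}_{k-1}^{x+2}\mid\xi_1=1)$; the four-term interior relation then appears at finite $k$ with shifted indices $k-1,k-2$, and (\ref{e bx}) follows by letting $k\to\infty$ via Theorem \ref{t exist}. You instead restore the Markov property by augmenting the state with the ladder mode, introduce the auxiliary function $\alpha^{+}$, and eliminate it algebraically; your auxiliary function makes explicit what the paper's bijection uses implicitly, namely that after two up-moves from $x$ the walk sits at $x+3$ in ``$+$ mode,'' exactly as after one up-move from $x+2$. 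Your boundary convention ($\alpha^{+}(y)=0$ for $y\geqslant U$), the collapsed rows at $U-1$ and $U-2$, the validity range $L+1\leqslant x\leqslant U-3$ of the eliminated equation, and the inhomogeneous terms $p,p^{2}$ for $\beta$ all check out. What your route buys: it is mechanical and extends directly to $L(r,s,p)$ with $r>2$ (one auxiliary function per ladder state), where the paper's bijection would have to be redone. What the paper's buys: the passage to the limit is explicit, since everything is phrased in terms of the monotone events $\mathcal{B}_k^x$; in your write-up you should likewise state the coupled relations first at finite $k$ (with the index shifts) and then let $k\to\infty$, as you indicate only parenthetically.

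On uniqueness, two cautions. You assert, but do not verify, that the $2\times 2$ determinant in your shooting argument is nonzero; to be fair, the paper is no more rigorous here (it invokes ``the theory of linear difference equations,'' which settles initial-value problems, not two-point boundary problems of this kind). More importantly, your fallback of ``a maximum-principle argument applied to the homogeneous recursion'' fails on the scalar equation as written: the interior row of (\ref{e ax}) contains the negative coefficient $-pq\,\alpha(x+1)$, so it is not a convex-combination identity and the naive maximum principle does not apply. It does apply to your coupled system, whose coefficients are nonnegative and sum to one; and any solution $\tilde\alpha$ of (\ref{e ax}) lifts to a solution of the coupled system by setting $\tilde\alpha^{+}(x+1)=\frac{1}{p}\bigl(\tilde\alpha(x)-q\tilde\alpha(x-1)\bigr)$, the rows at $U-1$ and $U-2$ being precisely what force $\tilde\alpha^{+}(U)=0$ and the convention at $U+1$. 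Combining this lift with the fact that the augmented chain exits $(L,U)$ almost surely (any run of $U-L$ consecutive up-moves forces exit) yields uniqueness cleanly, and would actually put that part of the theorem on firmer ground than the paper's own one-line appeal.
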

\begin{proof}
We only give the proof of $\beta(\cdot)$. For $k\geqslant 2$ we have 
\begin{equation}
\begin{split}
\beta_k(U-1)&=p\mathbb{P}(\mathcal{B}_k^{U-1}|\xi_1=1)+q\mathbb{P}(\mathcal{B}_k^{U-1}|\xi_1=-1)\\
&=p+q\beta_{k-1}(U-2).
\end{split}
\end{equation}
Similarly, 
\begin{equation}
\begin{split}
\beta_k(U-2)&=p\mathbb{P}(\mathcal{B}_{k}^{U-2}|\xi_1=1)+q\mathbb{P}(\mathcal{B}_{k}^{U-2}|\xi_1=-1)\\
 &=p\bigl(\mathbb{P}(\mathcal{B}_{k}^{U-2},\xi_2=1|\xi_1=1)+\mathbb{P}(\mathcal{B}_{k}^{U-2},\xi_2=-1|\xi_1=1)\bigr)+q\beta_{k-1}(U-3).
\end{split}
\end{equation}
Notice that
\begin{equation}
\mathbb{P}(\mathcal{B}_{k}^{U-2},\xi_2=1|\xi_1=1)=\mathbb{P}(\mathcal{B}_{k}^{U-2}|\xi_2=\xi_1=1)\mathbb{P}(\xi_2=1|\xi_1=1)=p
\end{equation}
and
\begin{equation}
\mathbb{P}(\mathcal{B}_{k}^{U-2},\xi_2=-1|\xi_1=1)=\mathbb{P}(\mathcal{B}_{k}^{U-2}|\xi_1=1, \xi_2=-1)\mathbb{P}(\xi_2=-1|\xi_1=1)=q\beta_{k-2}(U-2),
\end{equation}
so we have
\begin{equation}
\beta_k(U-2)=p^2+pq\beta_{k-2}(U-2)+q\beta_{k-1}(U-3).
\end{equation}
For $L+1\leqslant x\leqslant U-3$ and positive integer $k$, define 
\begin{equation}
L_k=\begin{cases}
min\{1\leqslant l\leqslant k:\xi_l=-1\},&\exists l=1,2,\cdots,k,\ s.t.\ \xi_l=-1,\\
k+1,&\forall l=1,\cdots,k,\ \xi_l=1.
\end{cases}
\end{equation}
In other words, $L_k$ is the least positive number $l$ such that $\xi_l=-1$. Then we have
\begin{equation}
\begin{split}
\beta_k(x)&=p\mathbb{P}(\mathcal{B}_{k}^{x}|\xi_1=1)+q\mathbb{P}(\mathcal{B}_{k}^{x}|\xi_1=-1)\\
&=p\bigl(\mathbb{P}\bigl(\mathcal{B}_{k}^{x},L_k=2|\xi_1=1)+\mathbb{P}(\mathcal{B}_{k}^{x},L_k\geqslant 3|\xi_1=1)\bigr)+q\beta_{k-1}(x-1)\\
&=p\bigl(\mathbb{P}\bigl(\mathcal{B}_{k}^{x},L_k=2|\xi_1=1)+p\mathbb{P}(\mathcal{B}_{k}^{x}|L_k\geqslant 3)\bigr)+q\beta_{k-1}(x-1).
\label{e bkx}
\end{split}
\end{equation}

\begin{figure*}[!htb]
\centering
\subfigure[Figure 1: Given $T_k\geqslant 3$] 
{\includegraphics[height=3in,width=3.2in,angle=0]{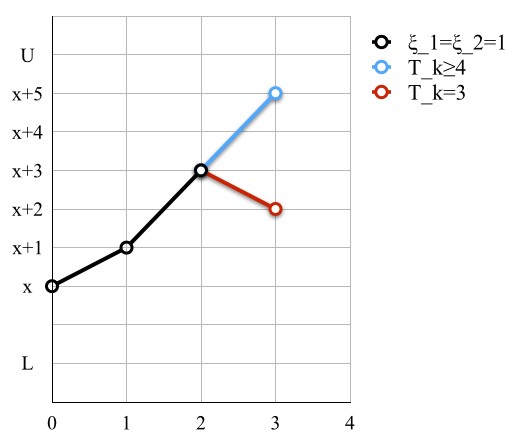}}
\subfigure[Figure 2: Given $\xi_1=1$] {\includegraphics[height=3in,width=2.6in,angle=0]{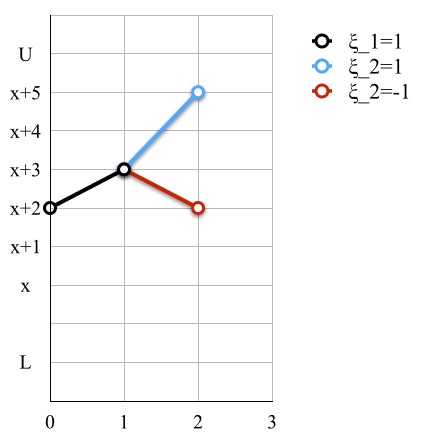}}
\end{figure*}

\noindent From the above two figures we can see that the paths
$$(x,x+1,x+3,x+3+X_3,\cdots,x+3+X_3+\cdots+X_k)$$
in $\mathcal{B}_{k}^{x}$ satisfying $L_k\geqslant 3$ are in one-to-one correspondence with the paths
$$(x+2,x+3,x+3+X_2,\cdots,x+3+X_2+\cdots+X_{k-1})$$
in $\mathcal{B}_{k-1}^{x+2}$ satisfying $\xi_1=1$. By the reflection principle, we know 
\begin{equation}
\pr(\mathcal{B}_{k}^{x}|L_k\geqslant 3)=\pr(\mathcal{B}_{k-1}^{x+2}|\xi_1=1).
\end{equation}
Furthermore, 
\begin{equation}
\beta_{k-1}(x+2)=p\mathbb{P}(\mathcal{B}_{k-1}^{x+2}|\xi_1=1)+q\beta_{k-2}(x+1),
\end{equation}
which means that
\begin{equation}
\pr(\mathcal{B}_{k}^{x}|L_k\geqslant 3)=\frac{\beta_{k-1}(x+2)-q\beta_{k-2}(x+1)}{p}.
\label{e bkxtk3}
\end{equation}
Combine (\ref{e bkx}), (\ref{e bkxtk3}) with
\begin{equation}
\mathbb{P}(\mathcal{B}_{k}^{x},L_k=2|\xi_1=1)=\mathbb{P}(\mathcal{B}_{k}^{x}|\xi_2=-1,\xi_1=1)\mathbb{P}(\xi_2=-1|\xi_1=1)=q\beta_{k-2}(x)
\end{equation}
and we get
\begin{equation}
\beta_k(x)=pq\beta_{k-2}(x)+p\beta_{k-1}(x+2)-pq\beta_{k-2}(x+1)+q\beta_{k-1}(x-1).
\end{equation}
According to Theorem \ref{t exist} and (\ref{e boundary conditions}), we finally find the linear difference equation that $\beta(x)$ satisfies. According to the theory of linear difference equations, (\ref{e bx}) has and only has one solution $\beta(\cdot)$. By the same token, we can get the corresponding conclusion for $\alpha(\cdot)$.
\end{proof}
The answer to the question we asked at first is a direct corollary of the above theorem as follows.
\begin{coro}
The probabilities that A ruins before B within finite rounds and B ruins before A within finite rounds are $\beta(0)$ and $\alpha(0)$ respectively, where $\alpha(\cdot),\beta(\cdot)$ can be computed by (\ref{e ax}) and (\ref{e bx}) after setting $L=-a$ and $U=b$.
\end{coro}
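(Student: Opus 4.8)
The plan is to read the gambling dynamics of this section as the ladder chain $S_n$ itself and to realise each terminal event --- one gambler losing all his coins --- as a barrier crossing of the walk started at $x=0$. Every coin A gains is a coin B loses, the bonus coin included, so after $n$ rounds A owns $a+S_n$ coins and B owns $b-S_n$. Thus A is broke the first time $S_n$ reaches the lower barrier $L=-a$, and B is broke the first time $S_n$ reaches the upper barrier $U=b$. With these values of $L,U$ the truncated stopping time $\tau_k^0$ of (\ref{e taukx}) is exactly the round (capped at $k$) on which the game is first decided, and $\mathcal{A}_k^0$, $\mathcal{B}_k^0$ are the events ``A is broke by round $k$'' and ``B is broke by round $k$''. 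In the wording of the corollary, ``A ruins before B'' is the event that A empties B's purse before his own is emptied, i.e. that the upper barrier is reached first, while ``B ruins before A'' is its mirror image at the lower barrier.

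Next I would upgrade ``by round $k$'' to ``within finitely many rounds''. Since the two barriers are fixed and the steps bounded, $\{\mathcal{A}_k^0\}_k$ and $\{\mathcal{B}_k^0\}_k$ increase with $k$ --- precisely the monotonicity used in the proof of Theorem~\ref{t exist} --- so the event that A is ever broke first is $\bigcup_{k\geqslant 1}\mathcal{A}_k^0$ and the event that B is ever broke first is $\bigcup_{k\geqslant 1}\mathcal{B}_k^0$. Continuity of probability along an increasing sequence, which is the limit guaranteed by Theorem~\ref{t exist}, gives these probabilities as $\lim_{k\to\infty}\alpha_k(0)=\alpha(0)$ and $\lim_{k\to\infty}\beta_k(0)=\beta(0)$. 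Matching this to the previous paragraph, the probability that A ruins B first equals $\beta(0)$ and the probability that B ruins A first equals $\alpha(0)$, exactly the two numbers in the statement; Theorem~\ref{t axbx} then evaluates them, since with $L=-a$, $U=b$ one only has to solve the linear difference equations (\ref{e ax}) and (\ref{e bx}) and read off the value at $x=0$.

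The care lies in the translation, not the computation. The one genuine point is that the correct encoding of a bankruptcy is the half-line $S_n\geqslant U$ (respectively $S_n\leqslant L$) rather than the single value $S_n=U$: because two consecutive wins move the walk by $2$, from $S_{n-1}=U-1$ the fortune can overshoot to $U+1$, leaving the loser a coin in deficit, and this is still a bankruptcy --- which is exactly why the rows of (\ref{e ax})--(\ref{e bx}) at $x=U-1$ and $x=U-2$ carry the extra $p$, $p^2$ terms rather than matching the generic row. One should also note that the start-up carries no memory, since $X_1\in\{-1,1\}$ can never be $2$ (there is no round $0$ to trigger a bonus), so the chain is correctly initialised at the integer $x=0\in[L,U]$ at which $\alpha,\beta$ are evaluated. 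The only formal caveat is the hypothesis $U-L\geqslant 4$ of Theorem~\ref{t axbx}: the finitely many small-stake cases $a+b\leqslant 3$ fall outside it, but there the range $L+1\leqslant x\leqslant U-3$ is empty and the boundary rows of (\ref{e ax})--(\ref{e bx}) already close up to pin down $\alpha,\beta$ directly.
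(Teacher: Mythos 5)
Your proposal is correct and takes essentially the same route as the paper, which states this corollary without proof as an immediate consequence of Theorem \ref{t axbx}: your identification of the gambling dynamics with the barrier events $\mathcal{A}_k^0$, $\mathcal{B}_k^0$, followed by the monotone limit from Theorem \ref{t exist} and evaluation via (\ref{e ax})--(\ref{e bx}) at $x=0$, is exactly the unpacking the paper leaves implicit. Your two added observations --- that bankruptcy must be encoded as the half-line $S_n\geqslant U$ because of the possible overshoot to $U+1$, and that the hypothesis $U-L\geqslant 4$ (i.e. $a+b\geqslant 4$) of Theorem \ref{t axbx} needs a separate remark for small stakes --- are clarifications the paper glosses over, not deviations from its argument.
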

\begin{coro}
For any integer $x\in[L,U]$, $\alpha(x)+\beta(x)=1$, where $U-L\geqslant 4$ are two integers and $\alpha(\cdot),\beta(\cdot)$ follows Theorem \ref{t exist}.
\label{c cx}
\end{coro}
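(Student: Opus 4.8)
The plan is to avoid introducing any new machinery and instead reduce the claim directly to the uniqueness already established in Theorem \ref{t axbx}. I would set $\tilde\alpha(x):=1-\beta(x)$ for integer $x\in[L,U]$, where $\beta$ is the function of (\ref{e bx}), and then show that $\tilde\alpha$ satisfies \emph{exactly} the linear difference equation (\ref{e ax}) together with its boundary conditions. Since Theorem \ref{t axbx} guarantees that (\ref{e ax}) has a unique solution, namely $\alpha$, this forces $\tilde\alpha\equiv\alpha$, i.e. $\alpha(x)+\beta(x)=1$ for all $x$.

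First I would dispose of the two boundary rows: $\tilde\alpha(U)=1-\beta(U)=0$ and $\tilde\alpha(L)=1-\beta(L)=1$, which match the prescribed boundary values in (\ref{e ax}). The interior rows $L+1\le x\le U-3$ are the easiest to handle, because the two systems share the same interior recursion. Substituting $\tilde\alpha=1-\beta$ into the right-hand side of the interior equation, the constant terms collect to $pq+p-pq+q=p+q=1$, while the $\beta$-terms reassemble into $-\beta(x)$ by the interior row of (\ref{e bx}); hence the interior equation for $\tilde\alpha$ holds identically, giving $\tilde\alpha(x)=1-\beta(x)$.

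The main (though still routine) obstacle is the two near-boundary rows $x=U-1$ and $x=U-2$, since it is precisely here that the $\alpha$- and $\beta$-systems carry \emph{different} inhomogeneous terms (namely $0$ versus $p$, and $0$ versus $p^2$). At $x=U-1$ one must verify $1-\beta(U-1)=q\,(1-\beta(U-2))$, which after inserting $\beta(U-1)=p+q\beta(U-2)$ reduces to the identity $1-p=q$. At $x=U-2$ one must verify $1-\beta(U-2)=pq\,(1-\beta(U-2))+q\,(1-\beta(U-3))$; using $\beta(U-2)=p^2+pq\beta(U-2)+q\beta(U-3)$ to cancel all $\beta$-terms, this collapses to $1=p^2+pq+q=p(p+q)+q=p+q=1$. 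These are the only spots where a stray coefficient or sign could cause trouble, so I would check them with care; once they are confirmed, $\tilde\alpha$ solves (\ref{e ax}) in every row and the uniqueness clause finishes the proof.

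Finally, I would record an independent probabilistic confirmation that makes the statement transparent. Since $\mathcal{A}_k^x$ and $\mathcal{B}_k^x$ are disjoint, $\alpha_k(x)+\beta_k(x)=\pr(\mathcal{A}_k^x\cup\mathcal{B}_k^x)$ is exactly the probability that the walk has left $(L,U)$ by step $k$, and by Theorem \ref{t exist} this increases to $\alpha(x)+\beta(x)$. Because $p>0$, any window of $U-L$ consecutive steps on which all $\xi_l=1$ increases $S$ by at least $U-L$ and therefore forces an exit through the upper boundary; such a window has probability at least $p^{\,U-L}>0$. Partitioning time into disjoint length-$(U-L)$ windows and using the independence of the $\xi_l$, the probability of never exiting is at most $(1-p^{\,U-L})^N\to 0$, so the walk leaves $(L,U)$ in finite time almost surely and the limit equals $1$. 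This route needs no difference equation, but it does require the a.s.-exit argument, which is why I would present the uniqueness argument as the principal proof.
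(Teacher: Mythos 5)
Your principal argument is correct, and it is essentially the paper's own proof in different clothes: the paper sets $\gamma(x)=\alpha(x)+\beta(x)-1$ and checks that $\gamma$ satisfies the homogeneous version of the system, whose only solution is $\mathbf{0}$; you instead check that $1-\beta$ satisfies the inhomogeneous system (\ref{e ax}) and invoke the uniqueness clause of Theorem \ref{t axbx}. By linearity these are the same uniqueness fact, and your row-by-row identities ($1-p=q$ at $x=U-1$, $p^2+pq+q=1$ at $x=U-2$, $pq+p-pq+q=1$ in the interior) are exactly the cancellations underlying the paper's claim that $\gamma$ solves the homogeneous system. Where you genuinely depart from the paper is your closing probabilistic argument: since $\mathcal{A}_k^x$ and $\mathcal{B}_k^x$ are disjoint, $\alpha_k(x)+\beta_k(x)$ is the probability of exiting $(L,U)$ by step $k$, and since a block of $U-L$ consecutive steps with all $\xi_l=1$ forces an exit whenever one has not already occurred, independence over disjoint blocks gives $\pr(\text{never exit})\leqslant(1-p^{\,U-L})^N\to 0$, so the monotone limit $\alpha(x)+\beta(x)$ equals $1$. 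The paper contains no such argument, and it buys something real: it uses neither the difference equations nor any structure specific to $L(2,2,p)$, so it transfers verbatim to any ladder chain $L(r,s,p)$ with $p>0$ (and to non-integer $L$, $U$, $x$, as in the corollary that follows in the paper), whereas the uniqueness route must be re-derived system by system. Both of your arguments are sound; presenting the uniqueness argument as principal, as you do, matches the paper, but the probabilistic one is arguably the more robust of the two.
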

\begin{proof}
Suppose $\gamma(x)=\alpha(x)+\beta(x)-1$, then Theorem \ref{t axbx} gives
\begin{equation}
\gamma(x)=\begin{cases}
0,&x=U,\\
q\gamma(x-1),&x=U-1,\\
pq\gamma(x)+q\gamma(x-1),&x=U-2,\\
pq\gamma(x)+p\gamma(x+2)-pq\gamma(x+1)+q\gamma(x-1),&L+1\leqslant x\leqslant U-3,\\
0,&x=L,\end{cases}
\end{equation}
which has the only solution $\mathbf{0}$. Therefore, $\alpha(x)+\beta(x)=1$.
\end{proof}
According to the Theorem \ref{t axbx} and its proof, whether $U-L\geqslant 4$ or $U-L<4$, $U,L$ and $x$ are integers or not, we always have the following corollary.
\begin{coro}
For any real number $x\in[L,U]$, there exists $0\leqslant\alpha(x)\leqslant 1$ and $0\leqslant\beta(x)\leqslant 1$ such that
$$\alpha(x)=\lim\limits_{k\rightarrow\infty}\alpha_k(x),\quad \beta(x)=\lim\limits_{k\rightarrow\infty}\beta_k(x).$$ 
Furthermore, $\alpha(x)+\beta(x)=1$.
\end{coro}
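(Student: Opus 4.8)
The plan is to treat the two assertions separately and to argue directly from the construction of $\mathcal{A}_k^x$ and $\mathcal{B}_k^x$, since nothing in their definition requires $x$, $L$ or $U$ to be integers.

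For existence I would reuse the argument of Theorem \ref{t exist} without change. For any real $x\in[L,U]$ the sequence $\{\mathcal{A}_k^x\}_{k\geqslant 1}$ is still increasing—once a path has crossed the lower boundary by time $k$ it has crossed it by time $k+1$—and the same holds for $\{\mathcal{B}_k^x\}$. Continuity of $\pr$ from below (the monotone convergence step of Theorem \ref{t exist}) then gives that $\alpha(x)=\lim_k\alpha_k(x)$ and $\beta(x)=\lim_k\beta_k(x)$ exist and lie in $[0,1]$. No integrality or drift information enters here, so this step is immediate.

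The real content is the identity $\alpha(x)+\beta(x)=1$, which I would obtain by showing that the walk leaves the strip $[L,U]$ in finite time almost surely. The subtlety is that the increments $X_k$ are \emph{not} independent, since $X_k$ is a function of $(\xi_{k-1},\xi_k)$, so a standard i.i.d. exit theorem does not apply; instead I would use the independence of the underlying $\xi_k$. Put $N=\lceil U-L\rceil+1$ and, for $j\geqslant 0$, let $E_j$ be the event $\{\xi_{jN+1}=\cdots=\xi_{(j+1)N}=1\}$. On $E_j$ every increment in that block lies in $\{1,2\}$, so the walk rises by at least $N>U-L$ across the block; hence if the path is still in $(L,U)$ at time $jN$ it must cross $U$ before time $(j+1)N$. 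Since the $E_j$ are independent with $\pr(E_j)=p^N>0$, the second Borel--Cantelli lemma shows that almost surely infinitely many of them occur, whence the first exit time $\tau^x=\lim_k\tau_k^x$ is finite with probability one.

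It then remains to package this as a partition. Setting $\mathcal{A}^x=\bigcup_k\mathcal{A}_k^x$ and $\mathcal{B}^x=\bigcup_k\mathcal{B}_k^x$, these events are disjoint (a single exit value cannot be both $\leqslant L$ and $\geqslant U$ when $L<U$) and their union is precisely the almost-sure event $\{\tau^x<\infty\}$, so $\alpha(x)+\beta(x)=\pr(\mathcal{A}^x)+\pr(\mathcal{B}^x)=\pr(\mathcal{A}^x\cup\mathcal{B}^x)=1$. The step I expect to require the most care is the exit estimate: one must check that a full block of up-steps forces a crossing irrespective of the walk's position at the block's start. As the remark preceding the corollary hints, an alternative route is to reduce the real case to the already-proved integer case of Corollary \ref{c cx}, using that $S_l^x$ takes values in the lattice $x+\mathbb{Z}$, so that exiting $[L,U]$ is equivalent to an integer-valued walk started at $\lfloor x\rfloor$ exiting a pair of integer boundaries.
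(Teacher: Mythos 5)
Your proof is correct, and it takes a genuinely different route from the paper. The paper never argues probabilistically here: for integer $x$ with $U-L\geqslant 4$ it proves $\alpha(x)+\beta(x)=1$ (Corollary \ref{c cx}) by setting $\gamma(x)=\alpha(x)+\beta(x)-1$, checking via Theorem \ref{t axbx} that $\gamma$ satisfies the homogeneous linear difference equation with zero boundary values, and invoking uniqueness of solutions to conclude $\gamma\equiv 0$; the present corollary for real $x$ and arbitrary $L<U$ is then simply \emph{asserted}, on the grounds that the derivation of Theorem \ref{t axbx} nowhere uses integrality. That assertion is the weakest point of the paper's treatment, since for non-integer $x$ or $U-L<4$ the boundary cases of the difference equation change and the uniqueness argument would have to be redone. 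Your blocking argument---a run of $N=\lceil U-L\rceil+1$ consecutive $\xi$-values equal to $1$ makes every increment in the block lie in $\{1,2\}$, hence forces an exit above $U$ if no exit has yet occurred, and such runs occur almost surely because disjoint blocks are independent with probability $p^N>0$---proves $\mathbb{P}(\tau^x<\infty)=1$ directly, after which the identity follows from the disjoint decomposition $\{\tau^x<\infty\}=\mathcal{A}^x\cup\mathcal{B}^x$ together with continuity from below, which identifies $\mathbb{P}(\mathcal{A}^x)$ with $\lim_k\alpha_k(x)$ (the existence part of your write-up coincides with the paper's Theorem \ref{t exist}). Your route is self-contained, uniform in real $x$ and $L<U$, and needs only $p>0$, which the paper assumes throughout; what it does not give, and what the paper's difference-equation route does give, is the explicit values of $\alpha$ and $\beta$---but the corollary does not ask for those. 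Two minor remarks: you only need \emph{one} block event to occur, so the elementary bound $(1-p^N)^m\rightarrow 0$ suffices in place of the second Borel--Cantelli lemma; and your closing alternative (reduction to the integer case of Corollary \ref{c cx}) is closer to the paper's intent but inherits the same boundary-case difficulty, so your main argument is the better of the two.
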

Hence, as the classic gambler's ruin problem, we can conclude that there must be one of the two gamblers ruin within finite rounds.

\section{When will a gambler ruin?}
We have defined the $\tau_{k}^{x}$ as a stopping time from above. In this section, we will first give the linear difference equation that $m_k(x)=\ep\tau_{k}^{x}$ (obviously it exists because it is bounded) satisfies, and then prove the existence of $\lim\limits_{k\rightarrow\infty}m_k(x)$ for any integer $x\in[L,U]$ and $0<p<1$. For simplicity, we assume that $U$ and $L$ are two integers satisfying $U-L\geqslant 4$ and $x$ is an integer between them in the rest of the paper.
\begin{thm}
The $m_k(\cdot)$ defined above are the unique solution of the linear difference equation
\begin{equation}
m_k(x)=\begin{cases}0,&x=U,\\
1+qm_{k-1}(x-1),&x=U-1,\\
1+p+pqm_{k-2}(x)+qm_{k-1}(x-1),&x=U-2,\\
1+pqm_{k-2}(x)+pm_{k-1}(x+2)-pqm_{k-2}(x+1)+qm_{k-1}(x-1),&L+1\leqslant x\leqslant U-3,\\
0,&x=L.
\end{cases}
\label{e mkx}
\end{equation}
\end{thm}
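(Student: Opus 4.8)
The plan is to run a one- and two-step analysis on $\tau_k^x$ for $k\ge 2$, exactly paralleling the proof of Theorem \ref{t axbx} for $\beta(\cdot)$, but now bookkeeping the \emph{time} consumed at each step rather than the terminal event. Since $\tau_k^x\le k$, each $m_k(x)=\mathbb{E}\tau_k^x$ is finite, so existence is free and the only content is the recursion together with uniqueness. I note at the outset that every right-hand side in (\ref{e mkx}) involves only $m_{k-1}(\cdot)$ and $m_{k-2}(\cdot)$; hence, once the base data $m_0\equiv 0$ and $m_1$ are fixed, the system is a forward recursion in $k$ and admits a unique solution by induction on $k$. The boundary rows $x=U$ and $x=L$ are immediate from $\tau_k^U=\tau_k^L=0$.

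For the near-boundary rows I would condition on the first one or two increments. At $x=U-1$, conditioning on $\xi_1$ gives absorption in one step when $\xi_1=1$ (since $S_1^{U-1}=U$) and a fresh restart at $U-2$ after one step when $\xi_1=-1$, so $m_k(U-1)=p\cdot 1+q\big(1+m_{k-1}(U-2)\big)=1+q\,m_{k-1}(U-2)$. At $x=U-2$ I would condition on $(\xi_1,\xi_2)$: the case $\xi_1=\xi_2=1$ produces the upward jump $X_2=2$, hence $S_2^{U-2}=U+1\ge U$ and absorption at time $2$, while $\xi_1=1,\xi_2=-1$ returns to $U-2$ at time $2$ and $\xi_1=-1$ restarts at $U-3$ at time $1$; collecting the constants with $p+q=1$ yields $m_k(U-2)=1+p+pq\,m_{k-2}(U-2)+q\,m_{k-1}(U-3)$, as claimed.

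The interior row $L+1\le x\le U-3$ is the crux. Conditioning on $\xi_1$ contributes $q\big(1+m_{k-1}(x-1)\big)$ from $\xi_1=-1$. Within $\xi_1=1$ I would split on $\xi_2$: the subcase $\xi_2=-1$ sends the walk back to $x$ at time $2$, contributing (after multiplying out) the $pq\,m_{k-2}(x)$ term; the subcase $\xi_2=1$ jumps to $x+3$ with ``last increment $=+1$'' memory. For this last subcase I would invoke the very path correspondence used for $\beta(\cdot)$ in the proof of Theorem \ref{t axbx}, which identifies these trajectories with those of a walk started at $x+2$ conditioned on $\xi_1=1$; the new ingredient is that the correspondence is a \emph{time shift by one}, i.e. $S_j^x=S_{j-1}^{x+2}$ for $j\ge 2$ along coupled paths, so that $\mathbb{E}[\tau_k^x\mid\xi_1=\xi_2=1]=1+\mathbb{E}[\tau_{k-1}^{x+2}\mid\xi_1=1]$. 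Eliminating the conditional expectation via the $(U-1)$-type identity $m_{k-1}(x+2)=p\,\mathbb{E}[\tau_{k-1}^{x+2}\mid\xi_1=1]+q\big(1+m_{k-2}(x+1)\big)$ and collecting constants produces exactly the stated interior equation, in particular the pair of terms $p\,m_{k-1}(x+2)-pq\,m_{k-2}(x+1)$.

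The main obstacle is precisely this time-shifted coupling in the $\xi_2=1$ subcase. In the probabilistic statement of Theorem \ref{t axbx} the one-step offset between the $x$- and $x+2$-walks is invisible, since the reflection argument only needs equality of absorption \emph{events}; for the mean duration it is instead the source of every additive constant in (\ref{e mkx}), so I must verify carefully that under the shift $\eta_j:=\xi_{j+1}$ the two walks agree position-by-position from $x+3$ onward and therefore satisfy $\tau_k^x=\tau_{k-1}^{x+2}+1$ path-by-path, with the caps $k$ and $k-1$ matching. Once this is in hand the remaining algebra is routine, and uniqueness follows from the forward-recursion remark above.
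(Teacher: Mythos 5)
Your proposal is correct and follows essentially the same route as the paper: condition on the first one or two increments (equivalently, on $\xi_1$ and the first time $L_k$ that a $-1$ occurs), use the time-shifted path correspondence $\mathbb{E}(\tau_k^x\mid \xi_1=\xi_2=1)=1+\mathbb{E}(\tau_{k-1}^{x+2}\mid\xi_1=1)$ inherited from the reflection argument of Theorem \ref{t axbx}, and eliminate the conditional expectation via $m_{k-1}(x+2)=p\,\mathbb{E}(\tau_{k-1}^{x+2}\mid\xi_1=1)+q\bigl(1+m_{k-2}(x+1)\bigr)$. In fact you make explicit the one-step time offset in the coupling (and the matching of the caps $k$ and $k-1$), which the paper only gestures at with ``proceed as in the derivation for $\beta_k(x)$.''
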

\begin{proof}
For any given integer $x\in[L,U]$, define $\ep\tau_k^x=m_k(x)$. Proceed as in the derivation of the recurrent relations for $\beta_k(x)$ and we have
\begin{equation}
\begin{split}
m_k(U-1)&=p\ep(\tau_k^{U-1}|\xi_1=1)+q\ep(\tau_k^{U-1}|\xi_1=-1)\\
&=1+qm_{k-1}(U-2),\\
m_k(U-2)&=p\ep(\tau_k^{U-2}|\xi_1=1)+q\ep(\tau_k^{U-2}|\xi_1=-1)\\
&=p\bigl(\ep(\tau_k^{U-2}\mathbf{1}\{L_k=2\}|\xi_1=1)+\ep(\tau_k^{U-2}\mathbf{1}\{L_k\ge 2\}|\xi_1=1)\bigr)+q\ep(\tau_k^{U-2}|\xi_1=-1)\\
&=p\bigl(\ep(\tau_k^{U-2}|L_k=2,\xi_1=1)\mathbb{P}(L_k=2|\xi_1=1)\\
&\quad \ +\ep(\tau_k^{U-2}|L_k\ge 3,\xi_1=1)\mathbb{P}(L_k\ge 3|\xi_1=1)\bigr)+q(1+\ep\tau_{k-1}^{U-3})\\
&=1+p+pqm_{k-2}(U-2)+qm_{k-1}(U-3).
\end{split}
\label{e m1}
\end{equation}
Furthermore,
\begin{equation}
\begin{split}
m_k(x)&=p\ep(\tau_k^x|\xi_1=1)+q\ep(\tau_k^x|\xi_1=-1)\\
&=p\bigl(\ep(\tau_k^x\mathbf{1}\{L_k=2\}|\xi_1=1)+\ep(\tau_k^x\mathbf{1}\{L_k\geqslant 3\}|\xi_1=1)\bigr)+q(1+\ep\tau_{k-1}^{x-1})\\
&=p\bigl(\ep(\tau_k^x|L_k=2,\xi_1=1)\mathbb{P}(L_k=2|\xi_1=1)\\
&\quad \ +\ep(\tau_k^x|L_k\geqslant 3,\xi_1=1)\mathbb{P}(L_k\geqslant 3|\xi_1=1)\bigr)+q(1+\ep\tau_{k-1}^{x-1})\\
&=p\Bigl[q(2+\ep\tau_{k-2}^x)+p\bigl(1+\ep(\tau_{k-1}^{x+2}|\xi_1=1)\bigr)\Bigr]+q+qm_{k-1}(x-1).
\end{split}
\end{equation}
Also,
\begin{equation}
\begin{split}
m_{k-1}(x+2)&=p\ep(\tau_{k-1}^{x+2}|\xi_1=1)+q\ep(\tau_{k-1}^{x+2}|\xi_1=-1)\\
&=p\ep(\tau_{k-1}^{x+2}|\xi_1=1)+q\bigl(1+m_{k-2}(x+1)\bigr).
\end{split}
\end{equation}
Hence, 
\begin{equation}
p\ep(\tau_{k-1}^{x+2}|\xi_1=1)=m_{k-1}(x+2)-q(1+m_{k-2}(x+1)),
\end{equation}
and
\begin{equation}
m_k(x)=1+pqm_{k-2}(x)+pm_{k-1}(x+2)-pqm_{k-2}(x+1)+qm_{k-1}(x-1).
\label{e m2}
\end{equation}
Combining the boundary conditions that 
\begin{equation}
\begin{cases}
m_k(L)=0,\\
m_k(U)=0
\end{cases}
\end{equation}
with (\ref{e m1}) and (\ref{e m2}) gives the linear difference equation showed in (\ref{e mkx}). 

\end{proof}

\begin{thm}
For any integer $x\in[L,U]$ and $0<p<1$, there exists $m(x)$ such that $m(x)=\lim\limits_{k\rightarrow\infty}m_k(x)$. The $m(x)$ is called \emph{mean duration}.
\label{t md}
\end{thm}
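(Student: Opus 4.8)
The plan is to recognize $\tau_k^x$ as a truncation of the true exit time and to reduce the statement to a single finiteness estimate. First I would observe that for each fixed outcome $\omega$ the sequence $\tau_k^x(\omega)$ is nondecreasing in $k$: if the path $S_l^x$ leaves the strip $(L,U)$ for the first time at some index $T$, then $\tau_k^x=T$ for every $k\ge T$, while on the event that it has not yet left by time $k$ one has $\tau_k^x=k$, which also grows with $k$. Hence $\tau_k^x\uparrow\tau^x$, where $\tau^x$ denotes the (possibly infinite) first exit time, and in particular $m_k(x)=\ep\tau_k^x$ is nondecreasing in $k$. By the monotone convergence theorem the limit $m(x):=\lim_{k\to\infty}m_k(x)=\ep\tau^x$ therefore exists in $[0,\infty]$, so the entire content of the theorem is the assertion that this limit is \emph{finite}.

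To prove finiteness I would establish a uniform lower bound on the chance of exiting within a fixed number of steps, independent of the current position and of the memory variable. Set $N=U-L$. The crucial structural fact is that $X_l=-1$ whenever $\xi_l=-1$, regardless of the value of $\xi_{l-1}$; thus $N$ consecutive down-steps decrease $S^x$ by exactly $N$ and force it to or below $L$ from any interior integer point. Consequently, for every $m\ge 1$, on the event $\{\tau^x>(m-1)N\}$ the position $S_{(m-1)N}^x$ is an integer lying strictly inside $(L,U)$, and the event $\{\xi_{(m-1)N+1}=\cdots=\xi_{mN}=-1\}$—which has probability $q^N>0$ since $0<p<1$, and which is independent of $\xi_1,\dots,\xi_{(m-1)N}$—guarantees an exit by time $mN$.

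Combining these two observations and iterating over blocks, I would conclude
\begin{equation*}
\pr(\tau^x>mN)\le\bigl(1-q^N\bigr)^m,\qquad m\ge 0,
\end{equation*}
so that $\tau^x$ has a geometric tail and $\ep\tau^x=\sum_{t\ge 0}\pr(\tau^x>t)<\infty$. Together with the monotone-convergence identity $m(x)=\ep\tau^x$ obtained above, this yields $m(x)<\infty$, which is precisely the existence asserted by the theorem.

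I expect the main obstacle to be the finiteness step rather than the convergence step. One must phrase the ``forced exit by consecutive down-steps'' argument so that the successive block events are genuinely independent of the past and uniform over all interior starting configurations, taking care that the second-order dependence of $X_l$ on $\xi_{l-1}$ does not interfere—which it does not, precisely because a down-step is insensitive to the previous symbol. Once the uniform exit bound is in place, the appeal to the monotone convergence theorem and the geometric-tail summation are routine.
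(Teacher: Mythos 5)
Your proof is correct, and it takes a genuinely different route from the paper's. Both arguments share the same reduction: since $\tau_k^x=\min\{\tau^x,k\}$ is nondecreasing in $k$, the limit $m(x)=\lim_k m_k(x)=\ep\tau^x$ exists in $[0,\infty]$ by monotone convergence, and everything hinges on a finiteness (boundedness) estimate. Where you differ is in how that estimate is obtained. The paper runs a Wald-type moment computation on $S_{\tau_n^x}^x$, split into two cases: when $p\neq\sqrt{2}-1$ the drift $p^2+2p-1$ of the increments is nonzero, and solving the first-moment identity for $\ep\tau_n^x$ gives a bound of the form $1+\frac{2\max\{|L|,|U|\}+2|p-q|+|2p-q|}{|p^2+2p-1|}$; when $p=\sqrt{2}-1$ the drift vanishes, and the paper instead computes the second moment $\ep(S_{\tau_n^x}^x)^2$, which requires explicit variance and covariance calculations for the dependent increments $X_k$. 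Your block argument replaces all of this with one observation: a down-step $X_l=-1$ occurs whenever $\xi_l=-1$ regardless of $\xi_{l-1}$, so $N=U-L$ consecutive down-symbols force an exit from any interior integer position, and since each block of symbols is independent of the past, $\pr(\tau^x>mN)\leqslant(1-q^N)^m$. This is more elementary (no case split at the critical $p$, no covariance bookkeeping), works uniformly for all $0<p<1$, and is in fact stronger than what the paper proves: a geometric tail gives finiteness of all moments of $\tau^x$, not just the mean. What the paper's heavier computation buys is an explicit, drift-and-variance-flavored bound on $m(x)$ (and machinery of the kind one would reuse to evaluate $m(x)$ via Wald identities), whereas your bound $N/q^N$ is numerically crude but entirely sufficient for the existence claim.
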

\begin{proof}
See Appendix A.
\end{proof}

\section{Ladder Chains}

In this Section, we generalize the $S_n$ in the Section 2 to a specific $r$-th order Markov Chain named \emph{ladder chain}, denoted by $L(r,s,p)$, and prove the recurrence of $L(2,2,\sqrt{2}-1)$.

\subsection{Basic Concepts}
\begin{defn}
Suppose $\xi_1,\xi_2,\cdots,\xi_n,\cdots\ i.i.d.\sim Bernoulli(p)$ and $s$ is a positive integer. Define 
\begin{equation}
X_k=\begin{cases}
-1,&\mathrm{if}\ \xi_k=-1,\\
s,&\mathrm{if}\ \xi_k=\xi_{k-1}=\cdots=\xi_{k-r+1}=1,\\
1,&\mathrm{otherwise},
\end{cases}
\end{equation}
and $S_n=X_1+X_2+\cdots+X_n$. Then we call the stochastic process $\mathbf{S}=(S_n:n\geqslant 1)$ a \emph{ladder chain} with \emph{order} $r$, \emph{step} $s$, and \emph{probability} $p$, simply denoted by $\mathbf{S}\sim L(r,s,p)$.
\end{defn}
Similar to the transience and recurrence of a state of a Markov chain, we can define the transience and recurrence of a ladder chain here.
\begin{defn}
A ladder chain $\mathbf{S}=(S_n:n\geqslant 1)$ is said to be transient if 
\begin{equation}
\pr(S_n\neq 0\mathrm{\ for\ any\ }n\geqslant 1)>0.
\end{equation}
Otherwise, the ladder chain $\mathbf{S}$ is said to be recurrent.
\end{defn}

\subsection{Recurrence of $L(2,2,\sqrt{2}-1)$}
Before demonstrating the recurrence of the ladder chain $L(2,2,\sqrt{2}-1)$, we first prove a rather intuitive theorem about the ruin probability when $U$ or $L$ are infinite.
\begin{thm}
Suppose $\mathbf{S}=(S_{n}:n\geqslant 1)\sim L(2,2,p)$ and x is a given positive integer. Then for $p=\sqrt{2}-1$,
\begin{equation}
\pr(S_n\geqslant x\mathrm{\ for\ some\ }n\geqslant 1)=\pr(S_n\leqslant -x\mathrm{\ for\ some\ }n\geqslant 1)=1.
\end{equation}
\label{t leave}
\end{thm}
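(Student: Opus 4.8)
The plan is to exhibit each of the two events as an increasing limit of finite-barrier ruin events and to evaluate the limiting probability by optional stopping. For the first equality fix $U=x$ together with a moving lower barrier $L=-N$; with these barriers the quantity $\beta(0)$ of Theorem~\ref{t axbx} is exactly the probability $\beta^{(N)}$ that the chain, started at $0$, reaches level $\geqslant x$ before dropping to $\leqslant -N$. Since reaching $\geqslant x$ before $\leqslant -N$ is one way to have $S_n\geqslant x$ for some $n$, we get $\pr(S_n\geqslant x\ \mathrm{for\ some}\ n)\geqslant\beta^{(N)}$ for every $N$, so it suffices to show $\beta^{(N)}\to 1$. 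The second equality is symmetric: fix $L=-x$, let $U=N\to\infty$, and show the analogous $\alpha^{(N)}\to 1$.

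The crux, and the step I expect to be the main obstacle, is that $S_n$ is not itself a martingale: conditioning on the memory variable $\xi_n$ its one-step drift equals $2p-q$ when $\xi_n=1$ and $p-q$ when $\xi_n=-1$, neither of which vanishes. The idea is to cancel the memory with a correction term and consider $M_n=S_n+p\,\mathbf{1}\{\xi_n=1\}$ (with the convention $\xi_0=-1$, so that $M_0=0$). Computing $\ep[M_{n+1}-M_n\mid\mathcal{F}_n]$, where $\mathcal{F}_n=\sigma(\xi_1,\dots,\xi_n)$, separately on $\{\xi_n=1\}$ and $\{\xi_n=-1\}$ yields the drifts $2p-q-pq$ and $p^2+p-q$; substituting $q=1-p$ shows that both equal the single expression $p^2+2p-1$, which vanishes exactly at $p=\sqrt2-1$. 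Thus at the critical probability $M_n$ is a mean-zero martingale with bounded increments $|M_{n+1}-M_n|\leqslant 2+p$. Discovering this correction, and observing that the two cancellation conditions coincide and are solvable only at the critical value, is what makes the whole argument work.

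It then remains to apply optional stopping and control the overshoot. Let $T_N$ be the first time $S_n\geqslant x$ or $S_n\leqslant -N$. Because $q>0$, a block of $x+N$ consecutive steps with $\xi=-1$ forces an exit, so $T_N$ has a geometric tail and $\ep T_N<\infty$; together with the bounded increments this justifies $\ep M_{T_N}=M_0=0$. On the lower exit the chain lands exactly at $-N$ with $\xi_{T_N}=-1$ (down-steps are $-1$, so there is no undershoot), giving $M_{T_N}=-N$; on the upper exit $S_{T_N}\in\{x,x+1\}$ (up-steps are at most $+2$, so the overshoot is at most $1$), giving $0\leqslant M_{T_N}\leqslant x+1+p$. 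Splitting $0=\ep M_{T_N}$ over the two exits gives $N(1-\beta^{(N)})=\ep\big[M_{T_N}\mathbf{1}\{S_{T_N}\geqslant x\}\big]\leqslant x+1+p$, hence $1-\beta^{(N)}\leqslant(x+1+p)/N\to 0$. Running the same martingale on $[-x,N]$ and using $M_{T_N}\geqslant N$ on the upper exit gives $N(1-\alpha^{(N)})\leqslant x\,\alpha^{(N)}\leqslant x$, so $\alpha^{(N)}\to 1$, which finishes the second equality.

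Alternatively, one can stay entirely within the difference-equation framework and solve (\ref{e bx}) and (\ref{e ax}) explicitly: the interior recurrence has characteristic polynomial $p\lambda^3-pq\lambda^2-(1-pq)\lambda+q$, which at $p=\sqrt2-1$ factors as $p(\lambda-1)^2(\lambda+\sqrt2)$, so the general interior solution is $A+By+C(-\sqrt2)^y$; the double root at $1$ is the analytic fingerprint of the critical, zero-drift regime. On this route the main difficulty is passing to the limit $N\to\infty$ while handling the special top boundary rows and the oscillating boundary-layer term $C(-\sqrt2)^y$, and it is precisely this bookkeeping that the martingale argument above avoids.
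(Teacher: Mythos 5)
Your proof is correct, and your main argument takes a genuinely different route from the paper's; in fact, the "alternative" you sketch in your final paragraph is essentially the paper's actual proof. The paper stays inside the difference-equation framework: it solves (\ref{e bx}) at $p=\sqrt{2}-1$ via the characteristic roots $\lambda_1=\lambda_2=1$, $\lambda_3=-\sqrt{2}$, writes $\beta(y)=c_1+c_2y+c_3\lambda_3^{y}$ with explicit boundary-determined constants, checks that $c_1\to 1$ and $c_2,c_3\to 0$ as $L\to-\infty$ with $U$ fixed while all three vanish as $U\to+\infty$ with $L$ fixed, and finishes the $\alpha$ half by the identity $\alpha(0)+\beta(0)=1$ from Corollary \ref{c cx}. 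Your compensated process $M_n=S_n+p\mathbf{1}\{\xi_n=1\}$ is indeed a martingale exactly at the critical $p$: on $\{\xi_n=1\}$ the increment is $2$ with probability $p$ and $-1-p$ with probability $q$, on $\{\xi_n=-1\}$ it is $1+p$ with probability $p$ and $-1$ with probability $q$, and both conditional means reduce to $p^2+2p-1=0$; your optional-stopping step is also sound, since $\ep T_N<\infty$ follows from the geometric-tail argument, there is no undershoot at the lower barrier (down-steps are $-1$) and overshoot at most $1$ at the upper barrier, yielding $1-\beta^{(N)}\leqslant (x+1+p)/N$ and $1-\alpha^{(N)}\leqslant x/N$, and the one-sided bound $\pr(S_n\geqslant x\mathrm{\ for\ some\ }n\geqslant 1)\geqslant\beta^{(N)}$ suffices because probabilities never exceed $1$. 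Comparing the two: your martingale argument avoids the explicit constants $c_1,c_2,c_3$ (the most delicate and error-prone part of the paper's proof) as well as the special boundary rows, it gives quantitative rates of convergence, it explains conceptually why $p=\sqrt{2}-1$ is critical (the unique $p$ at which the two conditional drifts admit a common compensation), and it generalizes readily to other ladder chains $L(r,s,p)$ at their critical probability with a history-dependent correction term. What the paper's route buys in exchange is an exact finite-barrier formula for $\beta(x)$, which has independent interest, and the reuse of machinery already established in Theorem \ref{t axbx} and Corollary \ref{c cx}.
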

\begin{proof}
It is easy to see that the event
\begin{equation}
\begin{split}
\{\omega:S_n\geqslant x\mathrm{\ for\ some\ }n\geqslant 1\}&\Leftrightarrow\{\omega:\mathrm{the\ particle\ leaves\ }(-\infty,x)\mathrm{\ from\ }0\}\\
&\Leftrightarrow\left.\left(\lim\limits_{k\rightarrow\infty}\mathcal{B}_{k}^{0}\right)\right|_{L=-\infty}^{U=x},
\end{split}
\end{equation}
and
\begin{equation}
\begin{split}
\{\omega:S_n\leqslant -x\mathrm{\ for\ some\ }n\geqslant 1\}&\Leftrightarrow\{\omega:\mathrm{the\ particle\ leaves\ }(-x,+\infty)\mathrm{\ from\ }0\}\\
&\Leftrightarrow\left.\left(\lim\limits_{k\rightarrow\infty}\mathcal{A}_{k}^{0}\right)\right|_{L=-x}^{U=+\infty}.
\end{split}
\end{equation}
Therefore, we have
\begin{equation}
\pr(S_n\geqslant x\mathrm{\ for\ some\ }n\geqslant 1)=\beta(0)\Bigl|_{L=-\infty}^{U=x},\quad \pr(S_n\leqslant -x\mathrm{\ for\ some\ }n\geqslant 1)=\alpha(0)\Bigr|_{L=-x}^{U=+\infty}.
\end{equation}
Solving the characteristic equation of (\ref{e bx}) gives
\begin{equation}
\left\{\begin{split}
&\lambda_1=\lambda_2=1,\\
&\lambda_3=-\sqrt{2}.
\end{split}\right.
\end{equation}
Therefore, the general formula of $\beta(x)$ is given by
\begin{equation}
\beta(x)=c_1+c_2x+c_3\lambda_3^x,
\end{equation}
where 
\begin{equation}
\begin{cases}
c_1=\frac{p^2L\lambda_3^{U-1}-pL\lambda_3^{U-2}+pqL\lambda_3^{U-3}+p(q-p)\lambda_3^{L}}{p(q-p)\lambda_3^L+q(1-2q-p(U-L))\lambda_3^{U-3}+(p(U-L)+2q^2-p)\lambda_3^{U-2}+(-p^2(U-L)+2p^2-q)\lambda_3^{U-1}};\\
c_2=\frac{-p^2\lambda_3^{U-1}+p\lambda_3^{U-2}-pq\lambda_3^{U-3}}{p(q-p)\lambda_3^L+q(1-2q-p(U-L))\lambda_3^{U-3}+(p(U-L)+2q^2-p)\lambda_3^{U-2}+(-p^2(U-L)+2p^2-q)\lambda_3^{U-1}};\\
c_3=\frac{p(p-q)}{p(q-p)\lambda_3^L+q(1-2q-p(U-L))\lambda_3^{U-3}+(p(U-L)+2q^2-p)\lambda_3^{U-2}+(-p^2(U-L)+2p^2-q)\lambda_3^{U-1}}.
\end{cases}
\end{equation}
Since for fixed $U\in\mathbb{R}$,
\begin{equation}
\lim\limits_{L\rightarrow-\infty}c_1=1,\quad \lim\limits_{L\rightarrow-\infty}c_2=\lim\limits_{L\rightarrow-\infty}c_3=0,
\end{equation}
and for fixed $L\in\mathbb{R}$,
\begin{equation}
\lim\limits_{U\rightarrow+\infty}c_1=\lim\limits_{U\rightarrow+\infty}c_2=\lim\limits_{U\rightarrow+\infty}c_3=0,
\end{equation}
we know that 
\begin{equation}
\beta(0)\Bigl|_{L=-\infty}^{U=x}=1,\ \beta(0)\Bigr|_{L=-x}^{U=+\infty}=0.
\end{equation}
It suffices to apply $\alpha(0)+\beta(0)=1$ for any $L<U$ and thus we have
\begin{equation}
\alpha(0)\Bigr|_{L=-x}^{U=+\infty}=1,
\end{equation}
which completes the proof.
\end{proof}
It is easy to see that the $p_0=\sqrt{2}-1$ here is the probability such that $\ep X_k=0,\forall\ k\geqslant 2$ of $L(2,2,p_0)$. We end the paper with a theorem about the transience and recurrence of the ladder chain $L(2,2,p)$. Since the corresponding properties of ladder chains with higher orders and steps can be derived by a similar method, we omit them in this paper.
\begin{thm}
The ladder chain $L(2,2,\sqrt{2}-1)$ is recurrent.
\end{thm}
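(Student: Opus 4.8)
The plan is to establish recurrence in the sense of the definition, i.e.\ that $\pr(S_n=0\text{ for some }n\geqslant 1)=1$, so that the ``escape'' event has probability $0$. The whole argument will rest on one structural feature of $L(2,2,p)$ visible in the definition of $X_k$: every downward increment equals exactly $-1$, whereas upward increments may be $+1$ or $+2$. First I would record the decisive consequence of this asymmetry, which I will call the \emph{descent lemma}: if $S_{m_1}\geqslant 1$ and $S_{m_2}\leqslant -1$ for some times $m_1<m_2$, then $S_t=0$ for some $t$ with $m_1<t\leqslant m_2$. Indeed, letting $t$ be the first time in $(m_1,m_2]$ at which $S_t\leqslant 0$ (nonempty, as it contains $m_2$), one has $S_{t-1}\geqslant 1$ while $S_t\leqslant 0$, so the increment $X_t=S_t-S_{t-1}$ is negative; since the only negative value a step can take is $-1$, it follows that $S_t=S_{t-1}-1\geqslant 0$, and hence $S_t=0$. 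In words: the walk can leap over the origin on the way up, via a $+2$ step, but never on the way down, so any passage from a positive height to a negative one is forced to land on $0$.

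Next I would convert Theorem \ref{t leave} from a single-crossing statement into a statement about the long-run range of the walk. Theorem \ref{t leave} gives, for $p=\sqrt{2}-1$ and every positive integer $x$, that $\pr(S_n\geqslant x\text{ for some }n)=\pr(S_n\leqslant -x\text{ for some }n)=1$. Intersecting these events over all $x\in\mathbb{N}$, a countable intersection of probability-one events, yields almost surely $\sup_{n\geqslant 1}S_n=+\infty$ and $\inf_{n\geqslant 1}S_n=-\infty$. From here a purely combinatorial observation upgrades this to a recurrence of visits to each half-line: if the set $\{n\geqslant 1:S_n\leqslant -1\}$ were finite, then $S_n\geqslant 0$ for all large $n$ and $\inf_n S_n$ would be finite, contradicting $\inf_n S_n=-\infty$; hence $\{n:S_n\leqslant -1\}$ is infinite almost surely, and symmetrically so is $\{n:S_n\geqslant 1\}$.

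Finally I would combine the two ingredients. Almost surely there is some $m_1\geqslant 1$ with $S_{m_1}\geqslant 1$, the set $\{n:S_n\geqslant 1\}$ being nonempty, and, because $\{n:S_n\leqslant -1\}$ is infinite, there is some $m_2>m_1$ with $S_{m_2}\leqslant -1$. The descent lemma then produces a time $t\in(m_1,m_2]$, in particular $t\geqslant 1$, with $S_t=0$. Thus $\pr(S_n=0\text{ for some }n\geqslant 1)=1$, which is exactly the negation of transience, so $L(2,2,\sqrt{2}-1)$ is recurrent.

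I expect the main subtlety to be the asymmetry highlighted above: because the $+2$ steps let the path jump over $0$ while ascending, it would be a mistake to argue from an arbitrary pair of ``high'' and ``low'' times. The argument must single out a \emph{descent}, a high time followed by a low time, and the role of the strengthened form $\sup_n S_n=+\infty$, $\inf_n S_n=-\infty$ is precisely to guarantee that such a descent always occurs; this is what lets me bypass any appeal to the strong Markov property or to a version of Theorem \ref{t leave} started from a general coin-toss state. A secondary point to verify is merely that the only non-positive value of an increment is $-1$, which is immediate from the definition of $X_k$ with $s=2$.
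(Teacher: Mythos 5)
Your proof is correct, and it takes a genuinely different route from the paper's. The paper conditions on the first step and decomposes the return event $\{T<\infty\}$ through stopping times: $L_k'$ (the next time the coin shows $-1$, at which the second-order memory of the chain resets) and $T'$ (the first visit to $\{0,1\}$). It then applies Theorem \ref{t leave} to the shifted walk after each reset time, and sums the resulting conditional contributions to get $\pr(\xi_1=1,T<\infty)+\pr(\xi_1=-1,T<\infty)=p+q=1$. You instead avoid all conditioning: you intersect the countably many probability-one events supplied by Theorem \ref{t leave} to conclude $\sup_n S_n=+\infty$ and $\inf_n S_n=-\infty$ almost surely, and then invoke a purely deterministic ``descent lemma'' (the only negative increment of $L(2,2,p)$ is $-1$, so any passage from a positive height to a negative one must land exactly on $0$) to extract a visit to the origin on almost every path. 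Both proofs rest on the same two structural facts --- Theorem \ref{t leave} and the impossibility of jumping over $0$ on the way down --- but the paper uses the second fact only implicitly (when it identifies $\{T<\infty\}$ with $\{S_n\leqslant 0\ \text{for some}\ n\}$ after an excursion above $0$, and when it argues $S_{T'}\in\{0,1\}$ and $\xi_{T'}=1$), whereas you isolate it as an explicit lemma. What each approach buys: the paper's decomposition yields finer information (the exact conditional return probabilities $p$ and $q$ given the first step) and exhibits the reset-time conditioning technique that generalizes to other hitting problems for ladder chains; your argument is shorter, entirely pathwise, and sidesteps the delicate conditional-independence assertions (the claims that the walk restarted at a reset time is distributed as a fresh copy) that are the most technical steps in the paper's proof.
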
 
\begin{proof}
Define $L_k'=\mathrm{inf}\{n\geqslant k:\xi_n=-1\}$, $T=\mathrm{inf}\{n\geqslant 1:S_n=0\}$, and $T'=\mathrm{inf}\{n\geqslant 1:S_n=0\mathrm{\ or\ }1\}$. Obviously, $\forall k\geqslant 1,\ \pr(L_k'<\infty)=1$. On one hand,
\begin{equation}
\begin{split}
\pr(\xi_1=1,T<\infty)&=\pr(\xi_1=1,T<\infty,L_2'<\infty)\\
&=\sum\limits_{k=2}^{\infty}\pr(\xi_1=1,T<\infty,L_2'=k)\\
&=\sum\limits_{k=2}^{\infty}\pr(T<\infty|\xi_1=1,L_2'=k)\pr(\xi_1=1,L_2'=k)\\
&=\sum\limits_{k=2}^{\infty}\pr(S_n\leqslant 0\mathrm{\ for\ some\ }n\geqslant 1|\xi_1=1,L_2'=k)\pr(\xi_1=1,L_2'=k)\\
&=pq+\sum\limits_{k=3}^{\infty}\pr(S_{n-k}\leqslant 2(2-k) \mathrm{\ for\ some\ }n\geqslant k+1)\pr(\xi_1=1,L_2'=k).\\
&=pq+\sum\limits_{k=3}^{\infty}\pr(S_{n}\leqslant 2(2-k) \mathrm{\ for\ some\ }n\geqslant 1)p^{k-1}q.
\end{split}
\end{equation}
By Theorem \ref{t leave}, we find $\pr(S_{n}\leqslant 2(2-k) \mathrm{\ for\ some\ }n\geqslant 1)=1$ for any $k\geqslant 3$ and consequently
\begin{equation}
\pr(\xi_1=1,T<\infty)=\sum\limits_{k=2}^{\infty}p^{k-1}q=p.
\end{equation}
On the other hand,
\begin{equation}
\begin{split}
\pr(\xi_1=-1,T<\infty)&=\pr(\xi_1=-1,T<\infty,T'<\infty)\\
&=\sum\limits_{k=2}^{\infty}\pr(\xi_1=-1,T<\infty,T'=k)\\
&=\sum\limits_{k=2}^{\infty}\pr(\xi_1=-1,T<\infty,T'=k,S_k=0)\\
&\ \ +\sum\limits_{k=2}^{\infty}\pr(\xi_1=-1,T<\infty,T'=k,S_k=1)
\end{split}
\end{equation}
For any $\omega\in\Omega$ satisfying $\{\xi_1=-1,T<\infty,T'=k,S_k=1\}$, assume for contradiction that $\xi_k=-1$; then it follows that $X_k=-1$ and $S_{k-1}=2$, which means that there exists some $l<k-1$ such that $S_l=0$ or 1 and $T'=\mathrm{inf}\{n\geqslant 1:S_n=0\mathrm{\ or\ }1\}\leqslant l<k$.
\begin{equation}
\begin{split}
\pr(\xi_1=-1,T<\infty)&=\sum\limits_{k=2}^{\infty}\pr(\xi_1=-1,T<\infty,T'=k,S_k=0)\\
&\quad \ +\sum\limits_{k=2}^{\infty}\pr(\xi_1=-1,T<\infty,T'=k,S_k=1,\xi_k=1)\\
&=\sum\limits_{k=2}^{\infty}\pr(\xi_1=-1,T'=k,S_k=0)\\
&\quad \ +\sum\limits_{k=2}^{\infty}\sum\limits_{l=k+1}^{\infty}\pr(\xi_1=-1,T<\infty,T'=k,S_k=1,\xi_k=1,L_{k+1}'=l)
\end{split}
\label{e p1}
\end{equation}
For any $l\geqslant k+1$, according to Theorem \ref{t leave},
\begin{equation}
\begin{split}
&\pr(\xi_1=-1,T<\infty,T'=k,S_k=1,\xi_k=1,L_{k+1}'=l)\\
=&\pr(S_n\leqslant 0\mathrm{\ for\ some\ }n\geqslant l+1,\xi_1=-1,T'=k,S_k=1,\xi_k=1,L_{k+1}'=l)\\
=&\pr(S_{n-l}\leqslant 2(k+1-l)\mathrm{\ for\ some\ }n\geqslant l+1|\xi_1=-1,T'=k,S_k=1,\xi_k=1,L_{k+1}'=l)\\
&\cdot\pr(\xi_1=-1,T'=k,S_k=1,\xi_k=1,L_{k+1}'=l)\\
=&\pr(S_{n}\leqslant 2(k+1-l)\mathrm{\ for\ some\ }n\geqslant 1)\pr(\xi_1=-1,T'=k,S_k=1,\xi_k=1,L_{k+1}'=l)\\
=&\pr(\xi_1=-1,T'=k,S_k=1)p^{l-k-1}q.
\end{split}
\label{e p2}
\end{equation}
Substituting (\ref{e p2}) into (\ref{e p1}) and applying Theorem \ref{t leave} again give
\begin{equation}
\begin{split}
\pr(\xi_1=-1,T<\infty)&=\sum\limits_{k=2}^{\infty}\pr(\xi_1=-1,T'=k,S_k=0)\\
&\quad \ +\sum\limits_{k=2}^{\infty}\pr(\xi_1=-1,T'=k,S_k=1)\\
&=\sum\limits_{k=2}^{\infty}\pr(T'=k|\xi_1=-1)\pr(\xi_1=-1)\\
&=\sum\limits_{k=2}^{\infty}\pr(S_n\geqslant 1\mathrm{\ for\ some\ }n\geqslant 1)\pr(\xi_1=-1)=q
\end{split}
\end{equation}
Therefore,
\begin{equation}
\pr(T<\infty)=\pr(\xi_1=1,T<\infty)+\pr(\xi_1=-1,T<\infty)=p+q=1,
\end{equation}
which completes the proof.
\end{proof}
\section*{\centering{Appendix A: Proof of Theorem \ref{t md}}}
\textbf{Theorem 3.2.} \emph{For any integer $x\in[L,U]$ and $0<p<1$, there exists $m(x)$ such that $m(x)=\lim\limits_{k\rightarrow\infty}m_k(x)$. The $m(x)$ is called \emph{mean duration}.}
\begin{proof}
We first prove that $m_k(x)$ is bounded on $[L,U]$ for any $0<p<1$, where $U$ and $L$ are two integers such that $U-L\geqslant 4$. We demonstrate it in two cases.\\
$\bullet$\textbf{(Case a)} $\mathbf{p\neq\sqrt{2}-1}$.\\
The random variable $S_{\tau_n^x}^x=\sum\limits_{k=0}^{n}S_k^x\mathbf{1}\{\tau_n^x=k\}$ describes the walk at the stopping time $\tau_n^x$. Computing the expectation of both sides gives
\begin{equation}
\begin{split}
\mathbb{E}S_{\tau_n^x}^x&=\mathbb{E}\sum\limits_{k=0}^{n}(S_k^x-S_n^x+S_n^x)\mathbf{1}\{\tau_n^x=k\}\\
&=\mathbb{E}S_n^x-\mathbb{E}\sum\limits_{k=0}^{n-1}(X_{k+1}+\cdots+X_n)\mathbf{1}\{\tau_n^x=k\}\\
&=\mathbb{E}S_n^x-\sum\limits_{k=0}^{n-1}(\mathbb{E}X_{k+1}\mathbf{1}\{\tau_n^x=k\}+\mathbb{E}(X_{k+2}+\cdots+X_n)\mathbf{1}\{\tau_n^x=k\}).
\label{e e11}
\end{split}
\end{equation}
From the definition of $X_k$ and $\tau_n^x$, we can conclude that $\mathbf{1}\{\tau_n^x=k\}$ is only related with $(\xi_1,\cdots,\xi_k)$, and $(X_{k+2},\cdots,X_n)$ is only related with $(\xi_{k+1},\xi_{k+2},\cdots,\xi_n)$. Hence, by the independence among $\xi_1,\cdots,\xi_n$ we know that $\mathbf{1}\{\tau_n^x=k\}$ and $(X_{k+2},\cdots,X_n)$ are mutually independent. It follows that
\begin{equation}
\mathbb{E}(X_{k+2}+\cdots+X_n)\mathbf{1}\{\tau_n^x=k\}=(n-k-1)(p^2+2p-1)\mathbb{P}(\tau_n^x=k).
\label{e e12}
\end{equation}
Besides, we have 
\begin{equation}
\mathbb{E}S_n^x=\mathbb{E}(x+X_1+\cdots+X_n)=x+p-q+(n-1)(p^2+2p-1)
\label{e e13}
\end{equation}
and
\begin{equation}
\sum\limits_{k=0}^{n-1}\mathbb{E}X_{k+1}\mathbf{1}\{\tau_n^x=k\}=\sum\limits_{k=0}^{n-1}\mathbb{E}X_{k+1}\mathbf{1}\{\tau_n^x=k,S_k^x\leqslant L\}+\sum\limits_{k=0}^{n-1}\mathbb{E}X_{k+1}\mathbf{1}\{\tau_n^x=k,S_k^x\geqslant U\}.
\label{e e14}
\end{equation}
If $S_{\tau_n^x}^x\leqslant L$, then 
$\begin{cases}\mathbb{P}(X_{k+1}=1)=p\\\mathbb{P}(X_{k+1}=-1)=q\end{cases}$, which shows that 
\begin{equation}
\mathbb{E}X_{k+1}\mathbf{1}\{\tau_n^x=k,S_k^x\leqslant L\}=(p-q)\mathbb{P}(\tau_n^x=k,S_{\tau_n^x}^x\leqslant L).
\label{e e15}
\end{equation}
Similarly, if $S_{\tau_n^x}^x\geqslant U$, then 
$\begin{cases}\mathbb{P}(X_{k+1}=2)=p\\\mathbb{P}(X_{k+1}=-1)=q\end{cases}$, which shows that 
\begin{equation}
\mathbb{E}X_{k+1}\mathbf{1}\{\tau_n^x=k,S_k^x\geqslant U\}=(2p-q)\mathbb{P}(\tau_n^x=k,S_{\tau_n^x}^x\geqslant U).
\label{e e16}
\end{equation}
Substitute (\ref{e e12}), (\ref{e e13}), (\ref{e e14}), (\ref{e e15}), (\ref{e e16}) into (\ref{e e11}) and we find 
\begin{equation}
\begin{split}
\mathbb{E}S_{\tau_n^x}^x&=x+p-q+(n-1)(p^2+2p-1)\\
&\quad \ -(p-q)\sum\limits_{k=0}^{n-1}\mathbb{P}(\tau_n^x=k,S_k^x\leqslant L)-(2p-q)\sum\limits_{k=0}^{n-1}\mathbb{P}(\tau_n^x=k,S_k^x\geqslant U)\\
&\quad \ -(p^2+2p-1)\Bigl[(n-1)\bigl(1-\mathbb{P}(\tau_n^x=n)\bigr)-\sum\limits_{k=0}^{n}k\mathbb{P}(\tau_n^x=k)+n\mathbb{P}(\tau_n^x=n)\Bigr]\\
&=x+p-q-(p^2+2p-1)\mathbb{P}(\tau_n^x=n)-(p-q)(\mathbb{P}(\mathcal{A}_n^x)-\mathbb{P}(\tau_n^x=n,S_n^x\leqslant L))\\
&\quad \ -(2p-q)\Bigl(\mathbb{P}(\mathcal{B}_n^x)-\mathbb{P}(\tau_n^x=n,S_n^x\geqslant U)\Bigr)+(p^2+2p-1)\mathbb{E}\tau_n^x.
\end{split}
\end{equation}
Notice that in this case $p^2+2p-1\neq0$ and $-L\leqslant S_{\tau_n^x}^x\leqslant U\ \mathrm{a.s.}$ Therefore, we have
\begin{equation}
m_n(x)=\mathbb{E}\tau_n^x\leqslant 1+\frac{2max\{|L|,|U|\}+2|p-q|+|2p-q|}{|p^2+2p-1|}
\end{equation}
is bounded on $[L,U]$.\\
$\bullet$\textbf{(Case b)} $\mathbf{p=\sqrt{2}-1}$.\\
\begin{equation}
\begin{split}
\mathbb{E}(S_{\tau_n^x}^x)^2&=\sum\limits_{k=0}^{n}\mathbb{E}(S_{\tau_n^x}^x)^2\mathbf{1}\{\tau_n^x=k\}\\
&=\sum\limits_{k=0}^{n}\mathbb{E}(S_k^x-S_n^x+S_n^x)^2\mathbf{1}\{\tau_n^x=k\}\\
&=\mathbb{E}(S_n^x)^2+\sum\limits_{k=0}^{n-1}\mathbb{E}(S_n^x-S_k^x)^2\mathbf{1}\{\tau_n^x=k\}-2\sum\limits_{k=0}^{n-1}\mathbb{E}S_n^x(S_n^x-S_k^x)\mathbf{1}\{\tau_n^x=k\}\\
&=\mathbb{E}(S_n^x)^2-\sum\limits_{k=0}^{n-1}\mathbb{E}(S_n^x-S_k^x)^2\mathbf{1}\{\tau_n^x=k\}-2\sum\limits_{k=0}^{n-1}\mathbb{E}S_k^x(S_n^x-S_k^x)\mathbf{1}\{\tau_n^x=k\}\\
&=\mathbb{E}(S_n^x)^2-\sum\limits_{k=0}^{n-1}\mathbb{E}(X_{k+1}+\cdots+X_n)^2\mathbf{1}\{\tau_n^x=k\}-2\sum\limits_{k=0}^{n-1}\mathbb{E}S_k^x(X_{k+1}+\cdots+X_n)\mathbf{1}\{\tau_n^x=k\}
\end{split}
\label{e e21}
\end{equation}
In this case, we have $\mathbb{E}X_2=\cdots=\mathbb{E}X_n=0$. Simple computation gives 
\begin{equation}
\begin{cases}
VarX_1=1-(p-q)^2=12p-4,\\
VarX_k=4p^2+pq+q=4-6p,&k\geqslant 2,\\
Cov(X_1,X_2)=2p^2+q^2-2pq=6-14p,\\
Cov(X_k,X_{k+1})=4p^3+2p^2q+q^2-pq^2-pq-2pq^2=3p-1,&k\geqslant 2
\end{cases}
\end{equation}
Then it follows that 
\begin{equation}
\begin{split}
\mathbb{E}(S_n^x)^2&=Var(x+X_1+\cdots+X_n)+(\mathbb{E}S_n^x)^2\\
&=VarX_1+\cdots+VarX_n+2\bigl[Cov(X_1,X_2)+Cov(X_2,X_3)+\cdots+Cov(X_{n-1},X_n)\bigr]+(\mathbb{E}S_n^x)^2\\
&=8+2n-22p+(x+p-q)^2.
\end{split}
\label{e e22}
\end{equation}
Notice that 
\begin{equation}
\begin{split}
&\sum\limits_{k=0}^{n-1}\mathbb{E}S_k^x(X_{k+1}+\cdots+X_n)I\{\tau_n^x=k\}=\sum\limits_{k=0}^{n-1}\mathbb{E}S_k^xX_{k+1}I\{\tau_n^x=k\}\\
=&\sum\limits_{k=0}^{n-1}\mathbb{E}S_k^xX_{k+1}\bigl(I\{\tau_n^x=k,S_k^x\leqslant L\}+I\{\tau_n^x=k,S_k^x\geqslant U\}\bigr)\\
=&\sum\limits_{k=0}^{n-1}\mathbb{E}(S_k^xX_{k+1}|\tau_n^x=k,S_k^x\leqslant L)\mathbb{P}(\tau_n^x=k,S_k^x\leqslant L)\\
&+\sum\limits_{k=0}^{n-1}\mathbb{E}(S_k^xX_{k+1}|\tau_n^x=k,S_k^x\geqslant U)\mathbb{P}(\tau_n^x=k,S_k^x\geqslant U)\\
=&\sum\limits_{k=0}^{n-1}(p-q)\mathbb{E}(S_k|\tau_n^x=k,S_k^x\leqslant L)\mathbb{P}(\tau_n^x=k,S_k^x\leqslant L)\\
&+\sum\limits_{k=0}^{n-1}(2p-q)\mathbb{E}(S_k|\tau_n^x=k,S_k^x\geqslant U)\mathbb{P}(\tau_n^x=k,S_k^x\geqslant U).
\end{split}
\end{equation}
So we have
\begin{equation}
\left|\sum\limits_{k=0}^{n-1}\mathbb{E}S_k^x(X_{k+1}+\cdots+X_n)I\{\tau_n^x=k\}\right|\leqslant (2p-q)max\{|L|,|U|\}. 
\label{e e23}
\end{equation}
Besides, 
\begin{equation}
\begin{split}
\mathbb{E}\sum\limits_{k=0}^{n-1}(X_{k+1}+\cdots+X_n)^2I\{\tau_n^x=k\}&=\mathbb{E}\sum\limits_{k=0}^{n-1}X_{k+1}^2I\{\tau_n^x=k\}+\mathbb{E}\sum\limits_{k=0}^{n-2}(X_{k+2}+\cdots+X_n)^2I\{\tau_n^x=k\}\\
&\quad \ +2\mathbb{E}\sum\limits_{k=0}^{n-2}X_{k+1}(X_{k+2}+\cdots+X_n)I\{\tau_n^x=k\}
\end{split}
\label{e e24}
\end{equation}
First, 
\begin{equation}
\mathbb{E}\sum\limits_{k=0}^{n-1}X_{k+1}^2I\{\tau_n^x=k\}\leqslant \mathbb{E}\sum\limits_{k=0}^{n-1}4I\{\tau_n^x=k\}\ \Rightarrow\ 0\leqslant\mathbb{E}_1\leqslant 4.
\label{e e25}
\end{equation}
Also,
\begin{equation}
\begin{split}
&\mathbb{E}\sum\limits_{k=0}^{n-2}(X_{k+2}+\cdots+X_n)^2I\{\tau_n^x=k\}\\
=&\sum\limits_{k=0}^{n-2}\Bigl[Var(X_{k+2}+\cdots+X_n)+\bigl(\mathbb{E}(X_{k+2}+\cdots+X_n)\bigr)^2\Bigr]\mathbb{P}(\tau_n^x=k)\\
=&\sum\limits_{k=0}^{n-2}(2n-2k-6p)\mathbb{P}(\tau_n^x=k)=2n-6p+(6p-2)\mathbb{P}(\tau_n^x=n-1)+6p\mathbb{P}(\tau_n^x=n)-2\mathbb{E}\tau_n^x\\
\leqslant& 2n-2\mathbb{E}\tau_n^x.
\end{split}
\label{e e26}
\end{equation}
At last,
\begin{equation}
\begin{split}
&\forall k=0,\cdots,n-1,j=k+2,\cdots,n,\ -2\leqslant X_{k+1}X_j \leqslant 4,\ a.s.\\
\Rightarrow &-2\leqslant \mathbb{E}\sum\limits_{k=0}^{n-2}X_{k+1}(X_{k+2}+\cdots+X_n)I\{\tau_n^x=k\}\leqslant 4.
\label{e e27}
\end{split}
\end{equation}
By (\ref{e e21}), (\ref{e e22}),  (\ref{e e23}), (\ref{e e24}), (\ref{e e25}), (\ref{e e26}), (\ref{e e27}), and the fact that $L\leqslant S_{\tau_n^x}^x\leqslant U\ \mathrm{a.s.}$ we eventually find
\begin{equation}
\mathbb{E}\tau_k^x\leqslant 2+11p+(2p-q)\mathrm{max}\{|L|,|U|\}+\frac{(x+p-q)^2+\mathrm{max}\{L^2,U^2\}}{2},
\end{equation}
which means that $\{m_k(x)\}$ is bounded on $[L,U]$. From the denifition of $\tau_k^x$ in (\ref{e taukx}), we can see obviously that $\tau_k^x\leqslant\tau_{k+1}^x,\mathrm{\ a.s.}$, which means that $\{m_k(x)\}$ is non-decreasing for any fixed $x$. Hence, $m(x)=\lim\limits_{k\rightarrow\infty}m_k(x)$ exists for any integer $x\in[L,U]$ according to the monotone convergence theorem.
\end{proof}

\section*{\centering{Acknowledgements}}

We are grateful to Prof. Zhonggen Su and Prof. Qinghai Zhang for many useful discussions and suggestions on systemizing our ideas and polishing this paper.

\end{document}